\definecolor{sepia}{rgb}{0.4,0,0.1}
\def\parcial#1#2{\frac{\partial #1}{\partial#2}}
\def\nn{\nonumber}
\def\exp{\operatorname{exp}}
\def\flecha{\longrightarrow}
\def \oN{{\overline \nabla}}
\def \wN{{\widetilde \nabla}}
\def \hN{{\widehat\nabla}}
\def\oH{{\overline H}}
\def\gR{\gamma_R}
\def\g{\gamma}
\def\FR{F_R}
\def\ot{{\overline t}}
\def\ema{M^{n-q}_\lambda}
\def\emaz'd{M^{n-q-d}_{\lambda \frak z'}}
\def\emafz'd{M^{n-q-d}_{\lambda f(z')}}
\def\ema0{M^{n-1}_{\lambda 0}}
\def\ce{\mathbb C}
\def\<{\langle}
\def\>{\rangle}
\def\({\left(}
\def\){\right)}
\def\re{\mathbb R}
\def\${\sqrt\lambda}
\def\ce{\mathbb C}
\def\R{\mathbb R}
\def\C{\mathbb C}
\newtheorem{teor}{Theorem}
\newtheorem{prop}[teor]{Proposition}
\newtheorem{lema}[teor]{Lemma}
\newtheorem{nota}[teor]{Remark}
\newtheorem{coro}[teor]{Corollary}
\newtheorem{theoremL}{Theorem}
\numberwithin{teor}{section}
\begin{document}

\title{Evolution by mean curvature flow \\ of Lagrangian spherical surfaces in complex Euclidean plane}

\author{Ildefonso Castro$^*$, Ana M. Lerma\thanks{Research partially supported by  a MEC-FEDER  grant MTM2014-52368-P.} \ and Vicente Miquel\thanks{Research partially supported by  the
MEC-FEDER  Project MTM2016-77093-P and the Generalitat Valenciana Project  PROMETEOII/2014/064.}}

\date{}
\maketitle{}

\begin{abstract} We describe the evolution under the mean curvature flow of embedded Lagrangian spherical surfaces in the complex Euclidean plane $\C^2$.
In particular,  we answer the Question 4.7 addressed in \cite{Ne10b} by A.\ Neves about finding out a condition on a starting Lagrangian torus in $\C^2$ such that the corresponding mean curvature flow becomes extinct at finite time and converges after rescaling to the Clifford torus. 
{\it 2010 Mathematics Subject Classification:\,} Primary 53C44, 53C40; Secondary 53D12.

\end{abstract}

\section{Introduction}

Let $F_0 : M^n \rightarrow \re^m$ be an immersion of a
compact manifold of dimension $n\geq 2$ into Euclidean space. The
mean curvature flow with initial condition $F_0$ is a smooth
family of immersions $F:M\times [0,T) \rightarrow \re^m$ satisfying
\begin{equation}\label{MCF}
\frac{\partial}{\partial t} F(p,t) = H(p,t), \ p\in M, \, t\geq 0;
\quad F(\cdot,0)=F_0, \tag{MCF}
\end{equation}
where $H(p,t)$ is the mean curvature vector of the submanifold
$M_t=F(M,t)$ at $p$. It is well-known that (\ref{MCF}) is a
quasilinear parabolic system that is invariant under
reparametrizations of $M$ and isometries of the ambient space and
short-time existence and uniqueness is guaranteed, being
$T<\infty$ the maximal time of existence.

The first classical works in this topic studied the evolution of
hypersurfaces by their mean curvature. We emphasize Huisken's
paper \cite{Hu84} on the flow of convex surfaces into spheres,
proving that if the initial hypersurface is uniformly convex, then
the mean curvature flow converges to a round point in finite time.
That is, the shape of $M_t$ approaches the shape of a sphere very
rapidly and no singularities will occur before the hypersurfaces
$M_t$ shrink down to a single point after a finite time. Recently,
mean curvature flow of higher codimension submanifolds has also
received interest by many authors who have paid attention mainly
to graphical submanifolds and symplectic or Lagrangian
submanifolds. We recall that Huisken's monotonicity formula
\cite{Hu90}, relating the formation of singularities to
self-shrinking solutions of the mean curvature flow, also applies
in any codimension. Concretely, the so-called Type I singularities
forming in Euclidean space look like self-similar contracting
solutions after an appropriate rescaling procedure. According to
\cite{Sm11}, this type of singularities usually occur when there
exists some kind of pinching of the second fundamental form.
Andrews and Baker \cite{AB10} proved a convergence theorem for the
mean curvature flow of closed submanifolds satisfying suitable
pinching condition and showed that such submanifolds contract to
round points.

In this paper we are interested in the class of Lagrangian immersions in complex Euclidean space
$\ce^n\!\equiv\!\re^{2n}$, which is a preserved class under the mean curvature flow. We notice that there do not exist
Lagrangian self-shrinking spheres (see \cite{CL14} or \cite{Sm11} and references therein) and, in addition, Smoczyk
showed that the class of smooth closed Lagrangian immersions in $\ce^n $ is not $\delta$-pinchable for any $\delta$
(see \cite[Section 4.1]{Sm11}). The authors do not know any available result regarding convergence of compact
Lagrangians in $\ce^n$. In fact, the following problem was posed by André Neves \cite[Question 7.4]{Ne10b} as a
Lagrangian analogue of Huisken's classical result \cite{Hu84} for the mean curvature flow of convex spheres:
\begin{quote}
{\it Find a condition on a Lagrangian torus in $\ce^2$, which
implies that the Lagrangian mean curvature flow $(M_t)_{0<t<T}$
will become extinct at time $T$ and, after rescale, $M_t$
converges to the Clifford torus.}
\end{quote}

Our contribution to this problem is the following main result.

\begin{theoremL}\label{TheoremA}
Let $M_0$ be an embedded Lagrangian compact surface of $\mathbb C^2$ which is contained in some hypersphere $\mathbb S^3(R_0)$ of radius $R_0>0$. Then the mean curvature flow \eqref{MCF} with initial condition $M_0$ has a unique solution defined on a maximal interval $[0,T)$, $T\leq R_0^2/4$. In addition:
\begin{itemize}
\item[(a)] If $M_0$ divides $\mathbb S^3(R_0)$ in two connected components of equal volume, then $T=R_0^2/4$, the limit $M_T$ of the evolving surfaces $M_t$ when $t\to T$ is a point and, after rescaling the flow by multiplication by $1/\sqrt{R_0^2-4 t}$, the limit is a Clifford torus in $\mathbb S^3 \!:= \mathbb S^3(1)\subset \ce ^2$.
\item[(b)] If $M_0$ divides $\mathbb S^3(R_0)$ in two connected components of different volumes (being $2 \pi A_0 R_0^3$ the lowest volume), then $T=A_0 R_0^2 /2\pi$, the limit $M_T$ of the evolving surfaces $M_t$ when $t\to T$ is a circle of radius $R_0 \sqrt{1-2 A_0/\pi}$ and, after rescaling the flow by multiplication by $\sqrt{A_0(R_0^2 - 4 t)/(A_0R_0^2-2\pi t)}$, the limit is a cylinder in $\re^3\subset \ce^2$. 
\end{itemize}
\end{theoremL}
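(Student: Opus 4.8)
My plan is to reduce the flow, via the Hopf fibration $h\colon\S^3(R)\to\S^2(R/2)$ (whose fibres are the integral curves of the unit Hopf field $p\mapsto Jp/R$), to the curve shortening flow (CSF) of an embedded closed curve on the round $2$-sphere, and then to invoke the theorems of Gage--Hamilton and Grayson. For a surface $M\subset\S^3(R)\subset\C^2$ the outward unit normal of $\S^3(R)$ at $p$ is $p/R$, and since $\dim_{\R}M=2=n$ the Lagrangian condition $J(T_pM)=N_pM\ni p$ is equivalent to $Jp\in T_pM$, i.e. to the Hopf field being tangent to $M$. Hence a compact embedded Lagrangian surface is $h$-saturated, so $M_0=h^{-1}(\gamma_0)$ for an embedded closed curve $\gamma_0\subset\S^2(R_0/2)$ (a simple closed curve if $M_0$ is connected), which in particular shows $M_0$ is a torus. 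As $h$ is a Riemannian submersion with fibres of length $2\pi R_0$, the two components of $\S^3(R_0)\setminus M_0$ have volumes $2\pi R_0$ times the two areas into which $\gamma_0$ splits $\S^2(R_0/2)$; thus ``equal volumes'' means ``$\gamma_0$ bisects $\S^2(R_0/2)$'', and in the unequal case the smaller volume $2\pi A_0R_0^3$ corresponds to a disc of area $A_0R_0^2$ on $\S^2(R_0/2)$, i.e.\ of area $A_0<\pi/2$ after rescaling to $\S^2(1/2)$.

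\textbf{Step 2: reduce MCF to CSF.} Two facts drive this. First, by Pinkall's description of Hopf tori, the mean curvature vector of $h^{-1}(\gamma)\subset\S^3(1)$ is the horizontal lift of the geodesic curvature vector of $\gamma\subset\S^2(1/2)$; so if $\gamma_\tau$ solves CSF on $\S^2(1/2)$ then $h^{-1}(\gamma_\tau)$ solves MCF in $\S^3(1)$ up to a tangential reparametrization, and by the uniqueness recalled in the Introduction it \emph{is} the MCF. Second, writing the curvature vector of $h^{-1}(\gamma)$ in $\C^2$ as $\vec H_{\C^2}=\vec H_{\S^3(R)}-\frac{2}{R^2}F$ and rescaling $\bar F:=F/R$, a short computation using $2R\dot R=-4$ shows that the normal term $-\frac{2}{R^2}\bar F$ is exactly cancelled by $-\frac{\dot R}{R}\bar F$, leaving $\partial_t\bar F=\frac{1}{R^2}\,\vec H_{\S^3(1)}(\bar M_t)$. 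Hence, with $R(t):=\sqrt{R_0^2-4t}$ and the time change $d\tau=dt/(R_0^2-4t)$, the flow is $M_t=R(t)\,h^{-1}(\bar\gamma_\tau)$, where $\bar\gamma_\tau$ is the CSF on $\S^2(1/2)$ issuing from $\gamma_0$ rescaled to $\S^2(1/2)$. Constructing the solution this way simultaneously yields existence, the inclusion $M_t\subset\S^3(\sqrt{R_0^2-4t})$, and the bound $T\le R_0^2/4$.

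\textbf{Step 3: identify the limits.} Let $A(\tau)$ be the area enclosed by $\bar\gamma_\tau$ on the side that is smaller at $\tau=0$. Gauss--Bonnet on $\S^2(1/2)$ gives $\int_{\bar\gamma_\tau}\kappa_g\,ds=2\pi-4A(\tau)$, so under CSF $\dot A=4A-2\pi$, whence $A(\tau)=\frac{\pi}{2}+(A_0-\frac{\pi}{2})e^{4\tau}$, while $e^{4\tau}=R_0^2/(R_0^2-4t)$ from the time change. In case (a), $A_0=\pi/2$ and $A(\tau)\equiv\pi/2$, so $\bar\gamma_\tau$ never collapses; by Grayson's theorem it exists for all $\tau$ and converges smoothly to a closed geodesic of $\S^2(1/2)$, necessarily a great circle, so $h^{-1}(\bar\gamma_\tau)\to$ Clifford torus while $R(t)\to0$ as $t\to R_0^2/4=:T$; hence $M_t$ collapses to a point and the $1/\sqrt{R_0^2-4t}$-rescaled flow converges to a Clifford torus in $\S^3$. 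In case (b), $A_0<\pi/2$, so $A(\tau)\downarrow0$ at the finite time $\tau^*=\frac14\ln\frac{\pi}{\pi-2A_0}$; by the Gage--Hamilton--Grayson theorem $\bar\gamma_\tau$ contracts asymptotically to a round point $p^*\in\S^2(1/2)$. Translating $\tau^*$ back gives $T=A_0R_0^2/2\pi$ and $R(T)^2=R_0^2-4T=R_0^2(1-2A_0/\pi)$, so $M_T=R(T)\,h^{-1}(p^*)$ is a great circle of $\S^3(R(T))$, that is, a round circle of radius $R_0\sqrt{1-2A_0/\pi}$ in $\C^2$. Finally, the singularity is of Type I: in normal coordinates at $p^*$, where $\bar\gamma_\tau$ is asymptotic to a round circle shrinking like $\sqrt{2(\tau^*-\tau)}$, the set $h^{-1}(\bar\gamma_\tau)$ is a tube of that radius around the fibre $h^{-1}(p^*)$, and multiplying the flow by $\sqrt{A_0(R_0^2-4t)/(A_0R_0^2-2\pi t)}=\sqrt{A_0/A(\tau(t))}$ both straightens the fibre into a line and normalizes the cross-section, the limit being a standard cylinder $\S^1\times\R$ inside the $\R^3\subset\C^2$ spanned by the fibre direction and the horizontal plane at $p^*$.

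\textbf{Main obstacle.} The technical heart is the reduction in Step 2 carried out with all constants in place --- so that the normal terms cancel and the time change is exactly $d\tau=dt/(R_0^2-4t)$ --- together with, in case (b), the blow-up analysis: one must upgrade ``the profile curve becomes asymptotically a round point'' to ``the Hopf torus becomes, under the stated rescaling, asymptotically the standard cylinder'', which requires the Type I / roundness estimates for CSF on $\S^2$ and a careful blow-up along a shrinking circle rather than a point. The remaining ingredients are classical: short-time existence and uniqueness for \eqref{MCF}, Gauss--Bonnet, the theorems of Gage--Hamilton and Grayson, and elementary ODE bookkeeping.
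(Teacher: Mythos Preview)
Your proposal is correct and follows essentially the same route as the paper: reduce to a Hopf torus via Proposition~\ref{MHC}, show that MCF in $\C^2$ becomes CSF on $\S^2(1/2)$ after the radial ansatz $R(t)=\sqrt{R_0^2-4t}$ and the time change $d\tau=dt/(R_0^2-4t)$ (the paper's Lemma~\ref{lema-conditions} and Theorem~\ref{TF}), compute the enclosed-area ODE via Gauss--Bonnet (Lemma~\ref{dAt}), and then split into the two cases using the known behaviour of CSF on the sphere; your identification $\sqrt{A_0(R_0^2-4t)/(A_0R_0^2-2\pi t)}=\sqrt{A_0/A(\tau(t))}$ is exactly the paper's rescaling \eqref{rescF}. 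The only place where the paper is substantially more detailed than your sketch is the blow-up in case~(b): rather than arguing heuristically that the rescaling ``straightens the fibre and normalizes the cross-section'', the paper (Proposition~\ref{limfib}) writes the rescaled Hopf torus explicitly in exponential coordinates about the limit fibre and takes the limit termwise to obtain the cylinder, which is precisely the ``careful blow-up along a shrinking circle'' you flag as the main obstacle.
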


In Proposition \ref{MHC} (see also \cite[Corollary 1]{We91}) we show that any compact Lagrangian surface of complex Euclidean plane contained in some hypersphere must be the preimage of a spherical closed curve by the corresponding Hopf fibration, providing in general an immersed torus that was called a Hopf torus by Pinkall in \cite{Pi85}.  As we shall see in the proof of Theorem \ref{TheoremA}, $A_0$ coincides with the area enclosed by the spherical curve $\pi (M_0/R_0) $, projection of $(1/R_0) M_0 \subset \mathbb S^3$ on $\mathbb S^2(1/2)$ by the Hopf fibration $\pi:\mathbb S^3\rightarrow \mathbb S^2(1/2) $. The isometry type of the torus $M_0$ depends not only on the length of the spherical curve $\pi (M_0/R_0) $ but also on the enclosed area $A_0$. It was proved in \cite{Pi85} that a Hopf torus $M_0$ is a critical point of the Willmore functional if and only if its corresponding spherical curve is an elastic curve.

Part (a) of Theorem \ref{TheoremA} is our answer to the Neves question quoted before for a Lagrangian embedded torus $M_0$.
We point out that the hypotheses on $M_0$ established in Theorem \ref{TheoremA} are preserved by the mean curvature flow (see Lemma \ref{dAt}).
Thinking of the shape of the closed spherical elastic curves, $M_0$ could be a Willmore torus. We remark that the first two
authors provided in \cite{CL14} four rigidity results for the Clifford torus in the class of compact self-shrinkers for the Lagrangian
mean curvature flow.

All the singularities appearing in Theorem \ref{TheoremA} are of Type I (see Remark \ref{notsingI}). \subsection{The ideas behind the main results} 
We now expose some ideas showing that the evolutions considered in Theorem \ref{TheoremA}  are natural in some geometric
sense since they (and some other studied in \cite{GSSmZ07}, \cite{Ne07}  and \cite{Ne10a}) can be regarded as evolutions
related with geometric flows of plane and spherical curves.

Let $\alpha_0:I_1\rightarrow \C^*$ be a regular plane curve and $\gamma_0:I_2 \rightarrow\mathbb S^2 (1/2) $
be a regular spherical curve in $\mathbb C^2$, where $I_1$ and $I_2$ are intervals in $\R$. Let
\[
F_0: I_1 \times I_2 \subseteq \R^2 \longrightarrow \C^2,  \qquad F_0(x,y) = \alpha_0 (x) \tilde{\gamma}_0 (y),
\]
with $\tilde\gamma_0:I_2 \rightarrow\mathbb S^3\subset \C^2 $ a horizontal lift of $\gamma_0$ via the Hopf fibration
$\pi: \mathbb S^3 \rightarrow \mathbb S^2(1/2)$. We denote by $\langle \cdot , \cdot \rangle$ and $J$ the Euclidean metric and the complex structure in $\ce ^2$ and consider simultaneously a one-parameter family of plane curves
\[
 \alpha=\alpha(x,t) \in \C^*, \, t\geq 0, \ \mathrm{with \  }
\alpha(x,0)=\alpha_0(x), \, x\in I_1,
\]
and a one-parameter family of spherical curves
\[
 \gamma=\gamma(y,t)\in \mathbb S^2(1/2), \, t\geq 0, \ \mathrm{with \ }
\gamma(y,0)=\gamma_0(y), \, y\in I_2,
\]
and define (see \cite{RU98}) the Lagrangians
\begin{equation}\label{Ft}
F=F(x,y,t)=\alpha(x,t)\tilde{\gamma}(y,t), \quad t\geq 0, \, (x,y)\in I_1\times I_2 \subseteq \R^2,
\end{equation}
where $\tilde{\gamma}=\tilde{\gamma}(y,t)\in\mathbb S^3\subset\mathbb{C}^2$ is a horizontal lift of $\gamma=\gamma(y,t)$ via the Hopf
fibration $\pi: \mathbb S^3 \rightarrow \mathbb S^2(1/2)$.

It is clear that $F(x,y,0)=F_0(x,y)$. Our goal is to analyse the possible evolutions of $\alpha$ and $\gamma$ in order
to $F$ be a solution of (\ref{MCF}).  Using \cite{RU98} and the Lagrangian character of each $F_t:=F(\cdot,\cdot,t)$,
$t\geq 0$, it is not difficult to get that $F$ is a solution of (\ref{MCF}) if and only if the following two equations
(corresponding to the normal directions $J(F_t)_x$ and $J(F_t)_y$) are satisfied:
\begin{equation}\label{Fsolx}
\left\langle \frac{\partial \alpha}{\partial t}, i\alpha_x \right\rangle   + \langle \alpha ,\alpha_x  \rangle
\left\langle \frac{\partial \tilde{\gamma}}{\partial t}, J \tilde \gamma \right\rangle = |\alpha_x|\kappa_\alpha +
\frac{\langle \alpha_x ,i\alpha \rangle}{|\alpha|^2}
\end{equation}
and
\begin{equation}\label{Fsoly}
 |\alpha|^2 \left\langle \frac{\partial \tilde{\gamma}}{\partial t}, J {\tilde \gamma_y}\right\rangle
 =|{\tilde\gamma_y}| \kappa_{\tilde\gamma}.
\end{equation}
In formulae \eqref{Fsolx} and \eqref{Fsoly} and in the rest of this section, subscript $x$ (resp.\ $y$) means derivative respect to $x$ (resp.\ $y$) and $\kappa $ will always denote curvature of the corresponding curve along the paper.  
Looking at (\ref{Fsoly}) we distinguish two complementary cases:

{\em Case (i):} there is no (normal) evolution for $\tilde{\gamma}=\tilde{\gamma}(y,t)$ (and hence for
$\gamma=\gamma(y,t)$) and so $\tilde \gamma$ (and $\gamma$) must be a static geodesic, say $$ \tilde{\gamma}(y,t)=(\cos y,
\sin y)\in\mathbb{C}^2, \, \forall t\geq 0 .$$ 

Then equation (\ref{Fsolx}) can be easily rewritten as
\[
\left( \frac{\partial \alpha}{\partial t} \right)^\bot =
\vec{\kappa}_\alpha - \frac{\alpha^\bot}{|\alpha|^2},
\]
where $\vec{\kappa}_\alpha$ is the curvature vector of $\alpha $
and $\alpha^\bot$ denotes the normal component of $\alpha$.
Putting this information in (\ref{Ft}) we arrive at the evolution
studied in \cite{GSSmZ07}, \cite{Ne07}  and \cite{Ne10a}.

{\em Case (ii):} necessarily $|\alpha|$ only depends on time
variable $t$, say $R(t):=|\alpha|$. This means that the evolution
of $\alpha = \alpha(x,t)$ consists of concentric circles centered
at the origin and, up to reparametrizations, it can be given by
$\alpha(x,t)=R(t)e^{ix}$. Now (\ref{Fsolx}) translates into a
simple o.d.e.\ for $R(t)$, concretely $-R \, \mathrm{d}R/\mathrm{d}t = 2$, whose
general solution is $R(t)=\sqrt{R(0)^2-4t}$. Putting this in
(\ref{Ft}), we get that in this case $F$ can be written as
\begin{equation}\label{FtNEW}
F(x,y,t)=\sqrt{R_0^2-4t}\, e^{ix}\, \tilde{\gamma}(y,t), \quad 0\leq t < \frac{R_0^2}{4},
\end{equation}
with $R_0=R(0)$ and where $\tilde{\gamma}(y,t)$ satisfy now the equation, coming from (\ref{Fsoly}), given by
\begin{equation}\label{flowLegendre}
 \left \langle \frac{\partial \tilde{\gamma}}{\partial t}, \frac{J {\tilde \gamma_y}}{|{\tilde \gamma_y}| }
 \right\rangle
 =\frac{\kappa_{\tilde \gamma}}{R_0^2-4t}.
\end{equation}
Using that the Hopf fibration $\pi$ is a Riemannian submersion, we rewrite (\ref{flowLegendre}) as
\begin{equation}\label{flowspherical}
\left\langle\frac{\partial \gamma}{\partial t} , \frac{\gamma \times  \gamma_y}{| \gamma_y|} \right\rangle
=\frac{2\kappa_{\gamma}}{R_0^2-4t},
\end{equation}
where $\times $ denotes the cross product in $\R^3$. 
We will check in Section \ref{TA} that (\ref{flowspherical}) is essentially the curve shortening flow in $\mathbb
S^2(1/2)$. The relation between this flow and the corresponding flow \eqref{FtNEW} of the initial Lagrangian surface will lead to different situations and their study in depth allows us to prove Theorem \ref{TheoremA}  in Section \ref{TA}.

\bigskip
\noindent {\bf Acknowledgments:}  The authors wish to thank A.~Neves, K.~Smoczyk and M.-T.~Wang for interesting conversations related to this paper. 

The authors would like to thank the referee for the very careful
review and for providing a number of valuable comments and suggestions.

\section{Preliminaries}

\subsection{The geometry of Hopf tori in the Hopf fibration}\label{hopf}

Let $\mathbb S^3(R)$ be the $3$-sphere of radius $R$ in $\ce^2\equiv \re^4$, let $\mathbb{S}^2(R/2)$ be the $2$-sphere of radius $R/2$ in $\re^3$, and let $\pi_R:\mathbb{S}^3(R)\rightarrow\mathbb{S}^2(R/2)$ be the Hopf fibration
\begin{align*}
\pi_R(z,w)=\frac{1}{2R}\left(2z\overline w,|z|^2-|w|^2\right),\qquad(z,w)\in\mathbb{S}^3(R)\subset\mathbb{C}^2.
\end{align*}
When $R=1$, we will omit the subindex $R$. We shall denote by $N$ the unit vector orthogonal to $\mathbb{S}^3(R)$ pointing inward. If $J$ is the natural complex structure of $\ce^2$, then the fibers of the Hopf fibration are the integral curves of $JN$, which are geodesics of $\mathbb{S}^3(R)$. For every $p\in \mathbb S^3(R)$, the subspace $\mathcal H_p=\{JN_p\}^\bot$ of $T_p \mathbb S^3(R)$ orthogonal to $JN_p$ is called horizontal, and it is invariant under $J$. Moreover $\pi_{R*}$ restricted to $\mathcal H$ is an isometry and, through this isometry, $J$ induces on $\mathbb S^2(R/2)$ a complex structure that we shall denote again by $J$.

Let $P_R$ be a  closed curve in $\mathbb S^2(R/2)$ which we will parametrize by $\gR(v)$,
 $v\in [0,2 \pi]$, where
$
\gR:\mathbb S^1\equiv[0,2\pi]/\!\sim \flecha \mathbb S^2(R/2)
$,
and define $M_R\subset\mathbb{S}^3(R)$ the Riemannian surface $\pi_R^{-1}(P_R)$ given by its position vector $F_R$ in 
$\mathbb{S}^3(R)$;
we remark that $F_R : \mathbb S^1 \times \mathbb S^1 \flecha \mathbb S^3(R)$. We have the following diagram: 
\begin{equation*}
\xymatrix{{M_R\equiv\mathbb{S}^1 \times \mathbb{S}^1 }\ar[rr]^{F_R} \ar[d]_{\pi_R} &&\mathbb{S}^3(R)\ar[d]^{\pi_R}\ar@{^{(}->}[rr]&&\mathbb{C}^2\\
P_R\equiv \mathbb{S}^1  \ar[rr]^{\gamma_R}&&\mathbb{S}^2(R/2)&&
}
\end{equation*}

If $X$ is a vector field tangent to $P_R$ or $\mathbb S^2(R/2)$, then $X^*$ will denote its horizontal lift tangent to $M_R$ or $\mathbb S^3(R)$, respectively. 

 Given $x\in \mathbb S^2(R/2)$, if we can write $\gamma_R$ as the image of a curve $c(u)$ in $T_x\mathbb S^2(R/2)$ by the exponential map $\exp_x$, then we can parametrize $F_R$ as $F_R(\beta,u) = \exp_{e^{i\beta} q} c(u)^*$, with $\pi(q)=x$ and $c(u)^*$ the horizontal lift of $c(u)$ at $e^{i\beta}q$.

Given $X,Y$ vector fields tangent to $\mathbb S^2(R/2)$ we get that
\begin{align}  
\pi_{R*}(\oN_{X^*} Y^*) = \hN_XY, \label{CDH}\end{align}
where $\oN$ and $\hN$ denote the Riemannian connections of $\mathbb S^3(R)$ and $\mathbb S^2(R/2)$, respectively. Moreover, we shall denote by $\wN$ the covariant derivative (that is, the standard directional derivative)  in $\ce^2$.

Let us denote by $e_1 ={\gamma_R'(u)}/{|\gamma_R'(u)|}$, by $\vec{\kappa}_R$ the curvature vector of $\gamma_R$ in $\mathbb S^2(R/2)$, by $\sigma_R$ and $\overline \sigma_R$  the second fundamental forms of $M_R$ in $\ce^2$ and $\mathbb S^3(R)$, respectively.  $H_R$ and $\oH_R$ will denote the respective mean curvatures. Moreover, $\widetilde\sigma$ will denote the second fundamental form  of $\mathbb S^3(R)$ in $\ce^2$.

One has that $\<\oN_{e_1^*} e_1^*, JN\> = - \<e_1^*, \oN_{e_1^*}JN\> = - \<e_1^*, \wN_{e_1^*}JN\> = \frac1R \<e_1^*, Je_1^*\> = 0$. That is, $\oN_{e_1^*} e_1^*$ is horizontal and, since it must be orthogonal to $e_1^*$, it is in the direction of $Je_1^*$.

As a consequence of this fact and \eqref{CDH} one has
\begin{align}
\vec{\kappa}_R = (\hN_{e_1} e_1)^\bot = \pi_{R*} ((\oN_{e_1^*} e_1^*)^{\bot}) = \pi_{R*} (\overline \sigma(e_1^*, e_1^*)).
\end{align}
That is, $\overline\sigma(e_1^*, e_1^*) = \vec{\kappa}_R^*$. Then, for the mean curvatures one has 
\begin{align}
H_R = \sigma_R(e_1^*,e_1^*) + \sigma_R(JN,JN) &=\overline\sigma_R(e_1^*,e_1^*) + \widetilde\sigma_R(e_1^*,e_1^*) + \overline\sigma_R(JN,JN)+ \widetilde\sigma(JN,JN) \nonumber \\
&= \vec{\kappa}_R^* + \frac{2}{R} N = \frac1{R} \vec{\kappa}^* - \frac2{R} F, \label{HR}
\end{align} 
where we have used, for the last equality, that under an homothety the curvature of a curve becomes divided by the magnitude of the  homothety. Recall also that when we consider $R=1$ we do not write the subindex $R$. Moreover, we have chosen $N$ pointing inward, which gives $N=-F$. 

\subsection{Spherical Lagrangian submanifolds}
In the complex Euclidean plane $\mathbb C^2$ we consider the bilinear
Hermitian product defined by
\[
(z,w)=z_1\bar{w}_1+z_2\bar{w}_2, \quad  z,w\in\mathbb C^2.
\]
Then $\langle\, \, , \, \rangle = {\rm Re} (\,\, , \,)$ is the
Euclidean metric on $\mathbb C^2$ and $\omega = -{\rm Im} (\,,)$ is the
Kaehler two-form given by $\omega (\,\cdot\, ,\,\cdot\,)=\langle
J\cdot,\cdot\rangle$, where $J$ is the complex structure on
$\mathbb C^2$.

Let $F:M \rightarrow\mathbb C^2$ be an isometric immersion of a surface $M$ into $\mathbb C^2$. $F $ is said to be Lagrangian if
$F^* \omega = 0$.  This is equivalent to the orthogonal decomposition $ T\mathbb C^2 =F_* TM \oplus
J F_* T M$, where $TM$ is the tangent bundle of $M$.
 
\begin{prop}\label{MHC}
Let $M$ be any compact Lagrangian surface of $\ce^2$ contained in some hypersphere $\mathbb S^3(R)$, $R>0$. Then $M$ must be the preimage of a closed curve in $\mathbb S^2(R/2)$ by the Hopf fibration  $\pi_R:\mathbb S^3(R) \rightarrow \mathbb S^2(R/2) $.
\end{prop}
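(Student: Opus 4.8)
The plan is to exploit the fact that $M\subset\mathbb S^3(R)$ is a compact surface sitting inside the total space of the Hopf fibration $\pi_R:\mathbb S^3(R)\to\mathbb S^2(R/2)$, whose fibers are the integral curves of the vector field $JN$ (geodesics of $\mathbb S^3(R)$). It suffices to show that $JN_p\in T_pM$ for every $p\in M$, because then $M$ is everywhere tangent to the (one-dimensional) fiber distribution, hence a union of fibers, hence the $\pi_R$-preimage of its image $P_R:=\pi_R(M)\subset\mathbb S^2(R/2)$; compactness of $M$ and the submersion property make $P_R$ a compact $1$-manifold, i.e.\ a finite union of closed curves (connectedness of $M$, or of the closed curve, can be added if desired, but it is not needed for the statement).

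First I would set up the orthogonal decompositions at a point $p\in M$. Since $F:M\to\ce^2$ is Lagrangian, $T_p\ce^2 = F_*T_pM \oplus JF_*T_pM$, and both summands are $2$-dimensional. Since $M\subset\mathbb S^3(R)$, the position vector $F(p)$ (equivalently $N_p=-F(p)/R$) is orthogonal to $T_pM$, so $N_p\in JF_*T_pM$; write $N_p = J(Z_p)$ for a unique $Z_p\in F_*T_pM$ with $|Z_p|=1$. Applying $J$ gives $JN_p = J^2 Z_p = -Z_p\in F_*T_pM$. Thus $JN_p$ is automatically tangent to $M$ at every point — this is the crux, and it is short once the Lagrangian decomposition is in hand. (Equivalently: $\langle JN_p, X\rangle = -\langle N_p, JX\rangle = 0$ for all $X\in T_pM$ would be false in general; the correct statement is that $JN_p$ lies in $F_*T_pM$ precisely because $N_p\in JF_*T_pM$, which in turn holds because $N_p\perp F_*T_pM$ and the Lagrangian condition forces the orthogonal complement of $F_*T_pM$ in $T_p\ce^2$ to be $JF_*T_pM$, while $T_p\mathbb S^3(R)^\perp$ in that complement is spanned by $N_p$.)

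With $JN_p\in T_pM$ established for all $p$, the vertical distribution of $\pi_R$ restricted to $\mathbb S^3(R)$ is contained in $TM$; since $M$ is a surface and the fibers are connected geodesics, each fiber through a point of $M$ stays in $M$, so $M = \pi_R^{-1}(\pi_R(M))$. Finally $\pi_R|_M : M\to P_R$ is a submersion with compact domain, so $P_R=\pi_R(M)$ is a compact embedded $1$-submanifold of $\mathbb S^2(R/2)$, hence a disjoint union of smooth closed curves, which we parametrize as in Section~\ref{hopf}. I expect the only genuine point requiring care to be the verification that $N_p\in JF_*T_pM$ and hence $JN_p\in F_*T_pM$ — i.e.\ correctly matching the Lagrangian splitting of $T_p\ce^2$ with the splitting $\re N_p \oplus T_p\mathbb S^3(R)$ — after which everything is a standard foliation/fibration argument.
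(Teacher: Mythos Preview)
Your proposal is correct and follows essentially the same approach as the paper's proof: both hinge on the observation that, because $M$ is Lagrangian and $N$ is normal to $M$, the Hopf vector field $JN$ is tangent to $M$, so $M$ is saturated by Hopf fibers and hence equals $\pi_R^{-1}(\pi_R(M))$. You simply supply more detail than the paper does---in particular the explicit use of the Lagrangian splitting $T_p\ce^2=F_*T_pM\oplus JF_*T_pM$ to force $N_p\in JF_*T_pM$ and hence $JN_p\in F_*T_pM$, and the saturation argument---where the paper just asserts ``since $M$ is Lagrangian, the restriction of $JN$ to $M$ is a tangent vector field on $M$ and its integral curves are contained in $M$''.
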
 
\begin{proof}
Let $N$ be the unit vector normal to $\mathbb S^3(R)$ in $\ce^2$. Then $JN$ is a vector field on $\mathbb S^3(R)$ whose integral curves are the fibres of the Hopf fibration $\pi_R$. Since $M$ is Lagrangian, the restriction of $JN$ to $M$ is a tangent vector field on $M$ and its integral curves are contained in $M$. In this way, the restriction of $\pi_R$ to $M$ is a Riemannian submersion on its image $\pi_R(M)=:C$ with the same fibres that the Hopf fibration. That is, $M = \pi_R^{-1}(C)$ for some closed curve $C\subset \mathbb S^2(R/2)$.
\end{proof}
\begin{nota}\label{param} 
{\rm If $F_R$ denotes  a Lagrangian  immersion of $M$ into  $\mathbb S^3(R)\subset \ce^2$, then Proposition \ref{MHC} tells us that $F_R$ can be regarded as $F_R :\mathbb S^1 \times \mathbb S^1 \flecha \mathbb S^3(R)$ and there exists a curve $\gR:\mathbb S^1  \flecha \mathbb S^2(R/2)$ such that $(\pi_R\circ F_R)(u,v) = \gR(v)$ and $F_R(\mathbb S^1\times\{v_0\})$ is a fibre of the Hopf fibration for every $v_0\in \mathbb S^1$.  That is, a Lagrangian spherical immersion of a compact surface is a Hopf torus and viceversa. There are many Lagrangian tori in $\mathbb{C}^2$ which are not Hopf tori; e.g., see~\cite[Section 3]{CL14}. }\end{nota}

\section{Proof of Theorem \ref{TheoremA}}\label{TA}

Let $M_t$ be a one-parameter family of Lagrangian surfaces of $\ce^2$ contained in the spheres $\mathbb S^3(R(t))$ of radius $R(t)>0$. Using Proposition~\ref{MHC} and Remark~\ref{param}, this family can be parametrized in the following way:
\begin{align} \label{FRt}
F_{R(t)}(u,v,t) = R(t) F(u,v,t),
\end{align} 
where  $F(\cdot,\cdot,t):\mathbb S^1 \times \mathbb S^1 \flecha \mathbb S^3$ is a family of Lagrangian immersions of a torus in $\ce^2$ contained in the unit hypersphere, and there exists a family of curves $\gamma(\cdot,t):\mathbb S^1  \flecha \mathbb S^2(1/2)$ such that $(\pi_{R(t)}\circ F_{R(t)})(u,v,t) = R(t) \gamma(v,t)$, which is equivalent to
\begin{align}\label{gammat}
(\pi\circ F)(u,v,t) =  \gamma(v,t).
\end{align}

\begin{lema}\label{lema-conditions}
The family of Lagrangian immersions given in \eqref{FRt} satisfies the mean curvature flow equation \eqref{MCF} in $\ce^2$ if and only if $R(t)=\sqrt{R_0^2-4t}$ and $F(\cdot,t)$ is the preimage by the Hopf fibration $\pi:\mathbb S^3 \flecha \mathbb S^2(1/2)$ of a curve $\gamma(\cdot,\ot(t))$ satisfying the mean curvature flow  equation in the 2-sphere, with the change of parameter given by $\ot=\frac{1}{4}\ln\frac{R_0^2}{R_0^2 - 4 t}$, where $R_0=R(0)$.
\end{lema}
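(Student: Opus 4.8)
The plan is to specialize the general equations \eqref{Fsolx}–\eqref{Fsoly} to the ansatz \eqref{FRt}, exactly as the ``ideas'' section suggests, but now being careful that the sphere radius $R(t)$ is allowed a priori to depend on $u$ as well, and deduce that the mean curvature flow forces it to depend only on $t$. First I would compute, using the Hopf-fibration formalism of Section \ref{hopf} together with \eqref{HR}, the mean curvature vector $H$ of the surface $M_t$ given by \eqref{FRt} inside $\C^2$, splitting it into the component along $JN$ (the fibre direction) and the component along $Je_1^*$ (the horizontal normal direction). Comparing with $\partial_t F_{R(t)}$, the $JN$-component will give an o.d.e.\ for $R$ that does not see $\gamma$ at all, while the $Je_1^*$-component will relate $\partial_t\gamma$ to the curvature vector $\vec\kappa$ of $\gamma$ in $\mathbb S^2(1/2)$, rescaled by a time-dependent factor.

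Concretely, from \eqref{HR} one has $H_R=\tfrac1R\vec\kappa^{\,*}-\tfrac2R F$, so with the scaling $F_{R(t)}=R(t)F$ the normal part of $\partial_t F_{R(t)}$ along $N=-F$ must match the $-\tfrac2R$-term. I expect this bookkeeping to produce precisely $R\,\dot R=-2$, hence $R(t)=\sqrt{R_0^2-4t}$, reproducing the computation sketched around \eqref{FtNEW}; the point to be careful about is that any $u$-dependence of $R$ would create an extra tangential or normal term that cannot be balanced, which is how ``$R$ depends only on $t$'' is actually forced rather than assumed. Once $R(t)$ is pinned down, the remaining (horizontal-normal) equation reads, after dividing by $R(t)$ and using that $\pi_R$ is a Riemannian submersion, that $\gamma(\cdot,t)$ moves with normal velocity $\vec\kappa_\gamma$ divided by $R(t)^2=R_0^2-4t$. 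This is the curve shortening (mean curvature) flow in $\mathbb S^2(1/2)$ run with a time-reparametrization: if $\ot$ is a new time with $d\ot/dt = 1/(R_0^2-4t)$, then $\gamma(\cdot,\ot)$ solves the genuine mean curvature flow in $\mathbb S^2(1/2)$. Integrating $d\ot = dt/(R_0^2-4t)$ with $\ot(0)=0$ gives $\ot=\tfrac14\ln\frac{R_0^2}{R_0^2-4t}$, which is the stated change of parameter.

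For the converse direction, I would simply reverse the computation: given $R(t)=\sqrt{R_0^2-4t}$ and a curve $\gamma(\cdot,\ot)$ solving mean curvature flow in $\mathbb S^2(1/2)$, reparametrized back to $t$ via $\ot(t)$, define $F_{R(t)}$ by \eqref{FRt} with $F(\cdot,t)=\pi^{-1}(\gamma(\cdot,\ot(t)))$ (horizontal lift); then the two scalar equations \eqref{Fsolx}–\eqref{Fsoly}, equivalently the two normal components of $\partial_t F_{R(t)}-H$, are satisfied by construction, so $F_{R(t)}$ solves \eqref{MCF}. Finally I would note that \eqref{MCF} only prescribes the normal velocity, so tangential reparametrizations are irrelevant; this justifies stating the equivalence purely in terms of the spherical curve flow and also explains why the $JN$-direction (a tangent direction to $M_t$, since $M_t$ is Lagrangian and spherical, containing the Hopf fibres) does not impose a separate constraint beyond the radius o.d.e.

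The main obstacle I anticipate is the first half: carrying out the decomposition of $H$ and of $\partial_t F_{R(t)}$ cleanly enough to see that \emph{no} $u$-dependence of the radius survives — i.e., establishing rigorously the claim implicit in ``Case (ii)'' that $|\alpha|$ must depend on $t$ only. Done carelessly one might just \emph{assume} the spherical ansatz with $R=R(t)$; the content of the lemma is that within the class of spherical Lagrangian surfaces (which by Proposition \ref{MHC} are exactly Hopf tori) the flow preserves sphericity with a radius depending only on $t$, and correctly identifying the induced flow on the base curve as mean curvature flow up to the explicit time change. The bookkeeping with the factors of $R$ coming from \eqref{HR} and from the homothety behaviour of curvature is where errors are easy to make, but it is routine once set up.
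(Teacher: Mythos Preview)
Your core computation---using \eqref{HR} to split the MCF equation into a radial part giving $R\,R'=-2$ and a horizontal part giving the time-rescaled curve shortening flow, then integrating $d\ot/dt=1/(R_0^2-4t)$---is exactly what the paper does, and is correct.

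Where you go astray is in identifying the ``main obstacle.'' You write that the content of the lemma is to show that no $u$-dependence of the radius survives, i.e.\ that sphericity is preserved. It is not. By the setup immediately preceding \eqref{FRt}, the family $M_t$ is \emph{assumed} to lie in spheres $\mathbb S^3(R(t))$; Proposition~\ref{MHC} then gives the parametrization $F_{R(t)}=R(t)F(\cdot,t)$ with $R$ a function of $t$ alone \emph{by hypothesis}. The lemma only asks: for a family already of this form, what does \eqref{MCF} force on $R$ and on $\gamma$? The paper's proof is accordingly just the orthogonal decomposition (noting $|F|=1$ forces $\partial_tF\perp F$, so the $F$-component and the $\vec\kappa_\gamma^*$-component separate cleanly) followed by the time-change integration---no more. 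The statement that the spherical class is actually preserved under the flow is the content of Theorem~\ref{TF}, and it is proved there not by a direct computation but by invoking uniqueness of MCF solutions to match the genuine flow to the ansatz solution furnished by the lemma. So the obstacle you anticipate simply is not part of this lemma, and once you drop it your outline is the paper's proof.

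A minor slip: you first describe decomposing $H$ along $JN$ and $Je_1^*$, but $JN$ is tangent to the Lagrangian torus (as you yourself note later), so the two normal directions are $N=-F$ and $Je_1^*$. The paper works directly with $F$ and $\vec\kappa_\gamma^*$ rather than going through \eqref{Fsolx}--\eqref{Fsoly}.
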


\begin{proof}
The left side of \eqref{MCF} is obviously
\begin{equation}\label{parcialt}
\parcial{\FR}{t}(u,v,t) = R'(t)\ F(u,v,t) + R(t) \parcial{F}{t}(u,v,t).
\end{equation}
To compute the right side of \eqref{MCF}, we will use \eqref{HR} at each time $t$. Using \eqref{HR} and \eqref{parcialt}, the evolution equation $\partial F_R / \partial t = H_R$ becomes
\begin{align*}
R' F + R  \parcial{F}{t} = \frac1R \vec{\kappa}_\gamma^* - \frac2R F.
\end{align*}
Since $|F|=1$, necessarily $\parcial{F}{t} $ is orthogonal to $F$, and so the above equation separates in two coupled ones:
\begin{align*}
\begin{cases}
R' =  - \frac2R \\
R \parcial{F}{t} =\frac1R \vec{\kappa}_\gamma^*
\end{cases}
\end{align*}

Putting $R(0)=R_0$, the solution of the first equation is $R^2(t) = R_0^2 - 4 t$. Plugging this solution in the second one, we obtain that
\begin{equation}\label{dFat}
  \parcial{F}{t} =\frac1{R_0^2 - 4 t} \vec{\kappa}_\gamma^*.
\end{equation}
Using \eqref{gammat}, the composition with $\pi_*$ of the above equation implies that
\begin{equation}\label{dgat}
  \parcial{\g}{t} = \frac1{R_0^2 - 4 t} \vec{\kappa}_\gamma.
\end{equation}
This is not exactly the mean curvature flow for $\gamma(v,t)$; but we consider the change of parameter $t=t(\ot)$ given by
\begin{equation}\label{ot}
\ot = \ot (t) =  \int_0^t \frac1{R_0^2 - 4 s} ds = -\frac14 \ln \frac{R_0^2 - 4 t}{R_0^2} = \ln \(\frac{R_0^2}{R_0^2 - 4 t}\)^{1/4}.
\end{equation}
In this way, we arrive at 
\begin{equation}\label{evolFR}
\parcial{\g}{\ot} =  \parcial{t}{\ot}\frac1{R_0^2 - 4 t} \vec{\kappa}_\gamma = \vec{\kappa}_\gamma ,
\end{equation}
which is the mean curvature flow for $\gamma(u,t(\ot))$. 
\end{proof}

Next we employ Lemma~\ref{lema-conditions} to prove the following result. In particular, we deduce that the spherical condition is preserved by the Lagrangian mean curvature flow.

\begin{teor}\label{TF} 
Let $F_{R_0}$ be a Lagrangian immersion of a surface in $\ce^2$, contained in  the hypersphere $\mathbb S^3(R_0)$ of radius $R_0>0$. Then $F_{R_0}$ evolves under the mean curvature flow following the formula:
\begin{equation}\label{evexpFR}
F_{R_0}(\cdot,t) = \sqrt{R_0^2 - 4 t}\ F(\cdot, t),
\end{equation}
where $F(\cdot,t)$ is the preimage by the Hopf fibration $\pi:\mathbb S^3 \flecha \mathbb S^2(1/2)$ of a curve $\gamma(\cdot,\ot(t))$ satisfying the evolution equation \eqref{evolFR}, where  $\ot(t)$ is the function given in \eqref{ot}.
\end{teor}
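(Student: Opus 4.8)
The plan is to reduce Theorem~\ref{TF} directly to Lemma~\ref{lema-conditions}, treating it essentially as a repackaging of that lemma together with the short-time existence and uniqueness of the mean curvature flow. First I would invoke Proposition~\ref{MHC} and Remark~\ref{param}: since $F_{R_0}$ is a Lagrangian immersion of a compact surface contained in $\mathbb S^3(R_0)$, it is a Hopf torus, so $F_{R_0}=R_0 F_0$ where $F_0:\mathbb S^1\times\mathbb S^1\to\mathbb S^3$ is the preimage of some closed curve $\gamma_0:\mathbb S^1\to\mathbb S^2(1/2)$ by $\pi$. I would then run the mean curvature flow with this initial condition; by the standard short-time existence and uniqueness for \eqref{MCF} there is a unique solution $M_t$ on a maximal interval $[0,T)$.

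The key step is to show that this (a priori arbitrary) solution is of the special form \eqref{FRt}, i.e.\ that sphericity and the Hopf structure are preserved. For this I would argue by uniqueness: starting from the curve $\gamma_0$ on $\mathbb S^2(1/2)$, solve the curve shortening flow \eqref{evolFR} to get $\gamma(\cdot,\ot)$ on a maximal time interval; pull back by $\pi$ to get $F(\cdot,t)$, set $R(t)=\sqrt{R_0^2-4t}$, and define $\widetilde F_{R_0}(\cdot,t):=R(t)F(\cdot,t)$ on the corresponding $t$-interval via the change of variable \eqref{ot}. By Lemma~\ref{lema-conditions}, this $\widetilde F_{R_0}$ satisfies \eqref{MCF} with the same initial datum $F_{R_0}$. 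By uniqueness of the mean curvature flow, $\widetilde F_{R_0}$ and the solution $M_t$ coincide on their common interval of definition, which gives exactly formula \eqref{evexpFR} with $F(\cdot,t)$ the $\pi$-preimage of $\gamma(\cdot,\ot(t))$, as claimed. In particular the evolving surfaces stay inside $\mathbb S^3(\sqrt{R_0^2-4t})$, so the spherical condition is preserved.

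The main obstacle is the one subtlety in this uniqueness argument: the mean curvature flow statement and the Hopf-tube construction require smooth immersions, and one must make sure the translation between the time parameters $t$ and $\ot$ via \eqref{ot} identifies maximal intervals correctly, and that the curve flow \eqref{evolFR} produces a smooth family as long as $R_0^2-4t>0$ (so the only way the Lagrangian flow can stop while still spherical is either the homothety factor reaching zero at $t=R_0^2/4$, or a singularity of the underlying curve flow). I would flag that the deeper dynamical consequences — what the limit actually is, when $T=R_0^2/4$ versus $T<R_0^2/4$, and the rescaled limits — are postponed, since here one only needs the ``coincidence of flows'' to conclude \eqref{evexpFR}; the analysis of $T$ and of the limit shapes then follows from the known behaviour of curve shortening flow on $\mathbb S^2(1/2)$ (Grayson/Gage–Hamilton type results) applied to $\gamma(\cdot,\ot)$ together with the explicit change of variable \eqref{ot}.
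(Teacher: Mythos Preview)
Your proposal is correct and follows essentially the same route as the paper's own proof: construct a candidate solution by solving the curve shortening flow \eqref{evolFR} from $\gamma_0=\pi\circ((1/R_0)F_{R_0})$, lift via $\pi$ to get $F(\cdot,t)$, multiply by $\sqrt{R_0^2-4t}$, invoke Lemma~\ref{lema-conditions} to check that this is a solution of \eqref{MCF} with the right initial datum, and then appeal to uniqueness of the mean curvature flow. Your additional remarks about matching the maximal time intervals and postponing the analysis of the limit shapes to the subsequent results are accurate and in line with how the paper organizes things (Corollary~\ref{33} and Proposition~\ref{limfib}).
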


 \begin{proof}
 Define $F_0=(1/R_0) F_{R_0}$, and $\gamma_0: \mathbb S^1 \flecha \mathbb S^2(1/2)$ satisfying $\pi\circ F_0 = \gamma_0$ as in Remark \ref{param}. Let $\gamma (\cdot,\overline t):\mathbb S^1 \times [0,\overline T[\flecha \mathbb S^2(1/2)$ be a solution of the curve shortening problem \eqref{evolFR} satisfying $\gamma(\cdot,0) = \gamma_0(\cdot)$. After the reparametrization of time given by \eqref{ot}, the family $\gamma(\cdot, t)$ is a solution of \eqref{dgat}. Then $F(\cdot,\cdot,t):\mathbb S^1 \times \mathbb S^1 \flecha \mathbb S^3$ defined as the family of Lagrangian surfaces of $\ce^2$ contained in $\mathbb S^3$ which are liftings of the Hopf fibration is a solution of \eqref{dFat} satisfying $F(\cdot,0)= F_0$, and $\sqrt{R_0^2 - 4 t} \ F(\cdot,t)$ is (by Lemma~\ref{lema-conditions})  a solution of the mean curvature flow equal to $F_{R_0}$ at $t=0$. By the uniqueness of the solution of the mean curvature flow with given initial condition, the statement of the theorem follows.
\end{proof}

In order to continue with the proof of Theorem \ref{TheoremA}, we need the following lemma.

\begin{lema}\label{dAt} 
Let $\gamma_0$ be a closed simple curve in $\mathbb S^2(1/2)$ enclosing a domain with area $A_0\leq \pi/2 $.
If $A(\ot)$ denotes the area enclosed by a solution $\gamma(\cdot,\ot)$ of \eqref{evolFR} with initial condition $\gamma(\cdot,0)=\gamma_0(\cdot)$, then $ A(\ot) = \pi/2 - \( \pi/2- A_0\) e^{4 \ot}$, and the extinction time of $\gamma(\cdot,\ot)$ is given by $\tau = \ln\(\frac{ \pi}{ \pi - 2 A_0}\)^{1/4} \leq \infty$.
\end{lema}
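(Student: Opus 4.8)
The plan is to use the classical Gage--Hamilton--Grayson theory for the curve shortening flow on surfaces, together with the Gauss--Bonnet formula on $\mathbb S^2(1/2)$, to derive an exact ODE for the enclosed area. First I would recall that for a simple closed curve $\gamma(\cdot,\ot)$ evolving by \eqref{evolFR} on $\mathbb S^2(1/2)$, the standard first-variation-of-area computation gives
\begin{equation*}
\deri{A}{\ot} = -\int_{\gamma(\cdot,\ot)} \kappa_\gamma \, ds,
\end{equation*}
since the normal speed equals the (signed, inward) geodesic curvature $\kappa_\gamma$, and the rate of change of enclosed area under a normal motion with speed $v$ is $-\int v\, ds$. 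Next I would apply the Gauss--Bonnet theorem to the domain $\Omega(\ot)$ enclosed by $\gamma(\cdot,\ot)$: since the Gaussian curvature of $\mathbb S^2(1/2)$ is the constant $K=4$, we get $\int_{\partial\Omega}\kappa_\gamma\, ds + \int_{\Omega} K\, dA = 2\pi$, hence $\int_{\gamma}\kappa_\gamma\, ds = 2\pi - 4 A(\ot)$. Combining the two displays yields the linear ODE
\begin{equation*}
\deri{A}{\ot} = 4A(\ot) - 2\pi, \qquad A(0) = A_0,
\end{equation*}
whose unique solution is exactly $A(\ot) = \pi/2 - (\pi/2 - A_0)e^{4\ot}$, as claimed.

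From this explicit formula the extinction time follows by a short argument. By Grayson's theorem on surfaces, a simple closed curve on a compact surface stays embedded and either shrinks to a round point in finite time or converges to a geodesic; since on $\mathbb S^2(1/2)$ closed geodesics enclose area $\pi/2$ (half the total area $\pi$) and our hypothesis is $A_0\leq \pi/2$, the area $A(\ot)$ is non-increasing (indeed strictly decreasing unless $A_0=\pi/2$) and tends to $0$, so the curve contracts to a point. The extinction time $\tau$ is then characterized by $A(\tau)=0$, i.e. $\pi/2 = (\pi/2-A_0)e^{4\tau}$, which solves to $\tau = \frac14\ln\!\big(\tfrac{\pi}{\pi-2A_0}\big) = \ln\!\big(\tfrac{\pi}{\pi-2A_0}\big)^{1/4}$. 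When $A_0=\pi/2$ the curve is (or becomes) a great circle, $A(\ot)\equiv\pi/2$ is stationary, and $\tau=\infty$, consistent with the formula.

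The main obstacle, and the point requiring care, is the justification that $\gamma(\cdot,\ot)$ remains a \emph{simple} (embedded) closed curve for all $\ot\in[0,\tau)$, so that Gauss--Bonnet applies to a genuine embedded disk $\Omega(\ot)$ throughout the evolution. This is not automatic from the PDE alone; it rests on Grayson's embeddedness-preservation theorem for the curve shortening flow on Riemannian surfaces (and the fact that self-intersections cannot form in finite time). I would cite Grayson's paper for this, and also note that one must check the area is being measured with the correct sign/orientation (the "inner" region, of area $\leq\pi/2$), which is consistent since $\deri{A}{\ot}<0$ keeps us in the regime $A<\pi/2$ once $A_0<\pi/2$. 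A minor additional remark: the convention that $\kappa_\gamma$ is the inward geodesic curvature must be matched with the sign in \eqref{evolFR} so that the flow is genuinely contracting; this is exactly the normalization under which $\vec\kappa_\gamma$ is the mean curvature vector, so no sign discrepancy arises. Everything else is the routine solution of a linear first-order ODE.
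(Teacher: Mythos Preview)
Your proof is correct and follows essentially the same route as the paper: first-variation of area gives $A'(\ot)=-\int_\gamma\kappa_\gamma\,ds$, Gauss--Bonnet on $\mathbb S^2(1/2)$ (with $K=4$) turns this into the linear ODE $A'=4A-2\pi$, and solving with $A(0)=A_0$ yields the stated formula and extinction time. Your additional care in invoking Grayson's theorem to guarantee that $\gamma(\cdot,\ot)$ remains embedded (so that Gauss--Bonnet applies to a genuine disk throughout) is a point the paper leaves implicit under ``it is well known'', but it is a worthwhile clarification rather than a different argument.
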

\begin{proof}
It is well known that the rate at which the area $A(\ot)$ decrease with time $\ot$ is given by $\partial A / \partial \ot = - \int_{\gamma} \kappa_\gamma \, \mathrm{d}s$, which implies using the Gauss-Bonnet formula that $A'(\ot) = 4 A(\ot) - 2\pi$, taking into account that $\gamma$ lies in a sphere of radius $1/2$. Solving the former equation, we obtain that $\ln \(2 \pi - 4 A(\ot)\)^{1/4} = \ln\(2\pi - 4 A_0\)^{1/4} + \ot$, and this proves the statement.
\end{proof}

\begin{coro}\label{33} Under the hypotheses of Theorem \ref{TF} and Lemma \ref{dAt}, there are only two possibilities for the evolution under the mean curvature flow of a Lagrangian embedding $F_{R_0}$ of a compact surface in $\ce^2$: 
\begin{itemize}
\item[(a)] If $F_{R_0}(\mathbb S^1\times\mathbb S^1)$ divides $\mathbb S^3(R_0)$ in two connected components of equal volume, 
then $F_{R_0}(\cdot, t)$ is defined for $t\in[0,R_0^2 / 4 )$, the limit of $F_{R_0}(\cdot, t)$ when $t\to R_0^2 / 4$ is the center of $\mathbb S^3(R_0)$, and rescaling $t$ by $\ot$ according to \eqref{ot} and $F_{R_0}(\cdot, t)$ by  $\widetilde F_{R_0}(\cdot,t)= \frac1{\sqrt{R_0^2 - 4 t} }F_{R_0}(\cdot,t)$, then $\lim_{\ot\to\infty} \widetilde F_{R_0}(\cdot,\ot)$ is the Clifford torus in $\mathbb S^3$.
\item[(b)] If $F_{R_0}(\mathbb S^1\times\mathbb S^1)$ divides $\mathbb S^3(R_0)$ in two connected components of different volumes, 
then $F_{R_0}(\cdot, t)$ is defined for $t\in[0,T) $, $T=A_0 R_0^2/2\pi<R_0^2 / 4$, and the limit of $F_{R_0}(\cdot, t)$ when $t\to T$ is a circle of radius $\sqrt{R_0^2 - 4 T}=R_0 \sqrt{1-2\pi/A_0} >0$, where $A_0$ is the area enclosed by the curve $\gamma_0 (\cdot)=\gamma(\cdot,0) \subset \mathbb S^2(1/2)$.
\end{itemize}
\end{coro}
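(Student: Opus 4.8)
The plan is to reduce everything, via Theorem~\ref{TF}, to the curve shortening flow (CSF) of an embedded closed curve on the round sphere $\mathbb S^2(1/2)$, and then to invoke Grayson's classification of the long‑time behaviour of that flow. By Theorem~\ref{TF} we may write $F_{R_0}(\cdot,t)=\sqrt{R_0^2-4t}\,F(\cdot,t)$, where $F(\cdot,t)$ is the Hopf preimage $\pi^{-1}\big(\gamma(\cdot,\ot(t))\big)$ of a solution $\gamma$ of \eqref{evolFR} with $\gamma(\cdot,0)=\gamma_0$, the base curve associated to $F_0=(1/R_0)F_{R_0}$ as in Remark~\ref{param}, and with $\ot(t)$ as in \eqref{ot}. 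First I would observe that, since $F_{R_0}$ is an \emph{embedding}, $\gamma_0$ is forced to be a regular \emph{simple} closed curve on $\mathbb S^2(1/2)$: a self‑intersection $\gamma_0(v_1)=\gamma_0(v_2)$ would make $F_{R_0}$ identify the two Hopf fibres $F_{R_0}(\mathbb S^1\times\{v_i\})$, and a singular point of $\gamma_0$ would violate immersivity. In particular $F_{R_0}(\mathbb S^1\times\mathbb S^1)$ is a torus, and by the Jordan curve theorem on the sphere $\gamma_0$ bounds exactly two regions, of areas $A_0\le\pi/2$ and $\pi-A_0$.

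Next I would match the two alternatives of the statement with the value of $A_0$. Since $\pi_{R_0}$ is a Riemannian submersion whose fibres are closed geodesics of $\mathbb S^3(R_0)$, hence all of length $2\pi R_0$, the coarea formula gives $\operatorname{vol}\big(\pi_{R_0}^{-1}(\Omega)\big)=2\pi R_0\,\operatorname{Area}(\Omega)$ for every region $\Omega\subseteq\mathbb S^2(R_0/2)$. Consequently the Hopf torus $F_{R_0}(\mathbb S^1\times\mathbb S^1)=\pi_{R_0}^{-1}(P_{R_0})$ splits $\mathbb S^3(R_0)$ into two pieces of equal volume exactly when the two regions bounded by $P_{R_0}$ have equal area, i.e.\ exactly when $A_0=\pi/2$; otherwise the two volumes differ and the smaller one equals $2\pi R_0\cdot R_0^2A_0=2\pi A_0R_0^3$. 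So case~(a) corresponds to $A_0=\pi/2$ and case~(b) to $A_0<\pi/2$, and these two possibilities are exhaustive.

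The analytic heart of the argument is Grayson's theorem for the curve shortening flow of embedded closed curves on a surface: the flow $\gamma(\cdot,\ot)$ of the regular simple closed curve $\gamma_0$ on the compact surface $\mathbb S^2(1/2)$ either (i) exists for all $\ot\ge0$ and converges smoothly to a closed geodesic, or (ii) becomes extinct at a finite time $\tau$, the curve shrinking to a point. To decide which alternative occurs I would simply read off Lemma~\ref{dAt}, namely $A(\ot)=\pi/2-(\pi/2-A_0)e^{4\ot}$. In case~(a) we have $A(\ot)\equiv\pi/2\neq0$, so extinction is impossible, alternative~(i) holds, and $\gamma(\cdot,\ot)$ converges as $\ot\to\infty$ to a closed geodesic of $\mathbb S^2(1/2)$ — necessarily a great circle, whose Hopf preimage is (up to congruence) the Clifford torus. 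Since by \eqref{ot} $\ot\to\infty$ corresponds to $t\to R_0^2/4$, the flow $F_{R_0}(\cdot,t)$ is defined on $[0,R_0^2/4)$, the rescaled family $\widetilde F_{R_0}(\cdot,t)=F(\cdot,t)$ converges to the Clifford torus of $\mathbb S^3$, and $F_{R_0}(\cdot,t)=\sqrt{R_0^2-4t}\,F(\cdot,t)\to0$, the centre of $\mathbb S^3(R_0)$. In case~(b), $A(\ot)\to0$ as $\ot\to\tau=\tfrac14\ln\frac{\pi}{\pi-2A_0}<\infty$, so alternative~(i) is excluded (a great circle encloses area $\pi/2\neq0$) and alternative~(ii) holds: $\gamma(\cdot,\ot)$ shrinks to a point $p\in\mathbb S^2(1/2)$, whence $F(\cdot,t)=\pi^{-1}\big(\gamma(\cdot,\ot(t))\big)$ converges to the fibre $\pi^{-1}(p)$, a great circle of $\mathbb S^3$. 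Solving $\ot(T)=\tau$ in \eqref{ot} yields $T=A_0R_0^2/2\pi<R_0^2/4$, so $F_{R_0}(\cdot,t)$ is defined on $[0,T)$ and converges as $t\to T$ to a circle of radius $\sqrt{R_0^2-4T}=R_0\sqrt{1-2A_0/\pi}>0$.

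The main obstacle is essentially the external input — Grayson's theorem on the curve shortening flow of embedded closed curves on a surface — together with the modest amount of care needed to transfer the resulting convergence of the base curves $\gamma(\cdot,\ot)$ (to a great circle in case~(a), to a point in case~(b)) through the Hopf fibration and through the homothetic factor $\sqrt{R_0^2-4t}$ (resp.\ the rescaling by $1/\sqrt{R_0^2-4t}$), so as to identify the genuine limits of the evolving surfaces — the Clifford torus, the centre of $\mathbb S^3(R_0)$, and the circle — as Hausdorff limits, with smooth convergence away from the extinction time.
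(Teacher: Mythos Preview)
Your proposal is correct and follows essentially the same route as the paper: reduce via Theorem~\ref{TF} to the curve shortening flow on $\mathbb S^2(1/2)$, invoke the Gage/Grayson dichotomy (the paper cites \cite{Ga90} and \cite{CZ01}), and read off the two cases from the area formula of Lemma~\ref{dAt}. You are in fact slightly more explicit than the paper in two places---justifying, via the coarea formula for the Hopf fibration, that the equal-volume hypothesis on $\mathbb S^3(R_0)$ is equivalent to $A_0=\pi/2$, and deriving the simplicity of $\gamma_0$ from the embedding hypothesis---but the overall argument is the same.
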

\begin{proof} 
From Theorem \ref{TF} it follows that the flow $F_{R_0}(\cdot, t)$ given in \eqref{evexpFR} is defined in $[0,T )$, the intersection of the intervals where $\sqrt{R_0^2 - 4 t}$ and $\gamma(\cdot,\ot(t))$ are defined. 
On the one hand, this implies immediately that $T\le R_0^2 / 4$. On the other hand, $\gamma(\cdot,\ot)$ is well defined on  $[0, \tau )$ (see Lemma \ref{dAt}). Using \eqref{ot}, we get that
\begin{equation}\label{tot}
t(\tau) = \frac{R_0^2}{4} \(1- e^{-4 \tau}\).
\end{equation}
It is well known for the curve shortening flow in the 2-sphere (see for instance \cite{Ga90} and also \cite{CZ01}) that there are only two possibilities:
\begin{itemize}
\item[(a)] $\tau = \infty$ and $\lim_{\ot\to\infty}\gamma(\cdot,\ot)$ is a geodesic of $\mathbb S^2(1/2)$.

This case corresponds to $A_0=\pi /2$. Then it follows from \eqref{tot} that $t(\infty) = R_0^2 /4$ and so $\lim_{t\to R_0^2/4} \g(\cdot, \ot(t)) $ is a geodesic in $\mathbb S^2(1/2)$. Thus, the limit of the preimage $F(\cdot,t)$ when $t\to R_0^2 /4$ is the preimage of a geodesic in $\mathbb S^2(1/2)$, which is the Clifford torus in $\mathbb S^3$. Therefore, rescaling $t$ to get $\ot$ and $F_{R_0}(\cdot,t)$ to $\widetilde F_{R_0}(\cdot,t)= \frac1{\sqrt{R_0^2 - 4 t} }F_{R_0}(\cdot,t)$, we obtain that
\[
\widetilde F_{R_0}(\cdot,\ot) := \widetilde F_{R_0}(\cdot,t(\ot))= F(\cdot,t(\ot))
\]
and, as we have just deduced, $\lim_{\ot\to\infty} F(\cdot,t(\ot))$ is the Clifford torus in $\mathbb S^3$. 

\item[(b)]  $\tau <\infty$ and $\lim_{\ot\to\tau}\gamma(\cdot,\ot)$ is a point of $\mathbb S^2(1/2)$. 

This case corresponds to $A_0<\pi /2$. Using Lemma~\ref{dAt} and \eqref{tot}, we have that $T=t(\tau)=A_0 R_0^2/2\pi < R_0^2 / 4 $. Moreover, the limit when $t\to T$ of $\gamma(\cdot,\ot(t))$ is a point of $\mathbb S^2(1/2)$, whose preimage is a circle of radius $1$ in $\mathbb S^3$. Thus $\lim_{t\to T} F_{R_0}(\cdot,t)$ is a circle of radius $\sqrt{R_0^2 - 4 T} >0$ in $\mathbb S^3(\sqrt{R_0^2 - 4 T})$. \qedhere
\end{itemize}
\end{proof}

In the case (a) of Corollary~\ref{33} we have used the total space to rescale. However, in the case (b) we will use the base space to rescale. A natural rescaling for the curve $\gamma$ in $\mathbb S^2(1/2)$ shrinking to a point $x\in \mathbb S^2(1/2)$ is to consider the 2-sphere in $\re^3$ and to multiply $\gamma-x$ by a function of $\ot$ such that the area enclosed by the rescaled curves be constant. According to Lemma \ref{dAt}, this rescaling is given by
\begin{equation}\label{rescG}
\widetilde\gamma(\cdot,\ot) -x = \sqrt{\frac{A_0}{\pi/2 - \( \pi/2- A_0\) e^{4 \ot}}} \(\gamma(\cdot,t(\ot)) - x\).
\end{equation}
Now a well known result on the curve shortening flow in a surface (see \cite{Zh98}) implies that the limit of the rescaling \eqref{rescG} when $\ot\to \tau$ (that is, $t\to T$) is a planar circle centered at $x$ of radius $\sqrt{A_0/\pi}$.

Hence, taking into account the formula given in  \eqref{ot}, for the Lagrangian surface $F_{R_0}(\cdot, t)$ we will use the rescaling

\begin{align}\label{rescF}
\widetilde F_{R_0}(\cdot,t) -  R(t)q= \sqrt{\frac{A_0}{\pi/2 - \( \pi/2- A_0\) \(\frac{R_0^2}{R_0^2 - 4 t}\)}} \(F_{R_0}(\cdot, t) - R(t) q\)
\end{align}
where $R(t)=\sqrt{R_0^2-4 t}$ and  $q$ is a point in the limit circle of $F$ when $t\to T$.  Notice that $ \pi(q)= x$ and that the rescaling factor in \eqref{rescF} coincides with that in \eqref{rescG} when we consider the relation \eqref{ot}.

\begin{prop}\label{limfib} 
When $T<R_0^2/4$,
the limit of the rescaling \eqref{rescF} when $t\to T$ is a cylinder passing through $\sqrt{R_0^2-4 T} q$, which is the product of a circle of radius $\sqrt{(R_0^2-4 T)A_0/\pi} $ and a line.
\end{prop}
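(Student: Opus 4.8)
The plan is to substitute the explicit description of the flow given by Theorem~\ref{TF} into the rescaling \eqref{rescF}, split the outcome into three summands — a constant one, one carrying the rescaled base curve, and one carrying the Hopf–fibre direction — and take the limit $t\to T$ of each; the only genuinely new point, compared with the planar picture \eqref{rescG}, is that a compact fibre circle has to be blown up around one of its points, which is what produces the line factor of the cylinder.

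Concretely, by Theorem~\ref{TF} and Corollary~\ref{33}(b) we are in the case $T<R_0^2/4$ and may write $F_{R_0}(\cdot,t)=R(t)F(\cdot,t)$ with $R(t)=\sqrt{R_0^2-4t}\to R_T:=\sqrt{R_0^2-4T}>0$, and $F(u,v,t)=e^{iu}\,\tilde\gamma(v,t)$ as in \eqref{FtNEW}, where $\tilde\gamma(\cdot,t)$ is a horizontal lift of the solution $\gamma(\cdot,\ot(t))$ of \eqref{evolFR}, which shrinks to the point $x$ as $t\to T$. I would choose the lift so that $\tilde\gamma(\cdot,t)$ converges uniformly to the point $q\in\pi^{-1}(x)$ appearing in \eqref{rescF} (this is just a $t$-dependent shift of $u$ and changes neither the surface $F(\cdot,t)$ nor its rescaling), rewrite, via \eqref{ot} and Lemma~\ref{dAt}, the rescaling factor of \eqref{rescF} as $\mu(t):=\sqrt{A_0/A(\ot(t))}$ with $A(\ot)=\pi/2-(\pi/2-A_0)e^{4\ot}$ (so that $\mu(t)\to\infty$), and use $F_{R_0}(\cdot,t)-R(t)q=R(t)(F(\cdot,t)-q)$ to get
\begin{equation*}
\widetilde F_{R_0}(u,v,t)=R(t)\,q+\mu(t)R(t)\,e^{iu}\bigl(\tilde\gamma(v,t)-q\bigr)+\mu(t)R(t)\bigl(e^{iu}-1\bigr)q .
\end{equation*}
The first summand tends to $R_T\,q=\sqrt{R_0^2-4T}\,q$. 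For the second, decompose $\tilde\gamma(v,t)-q$ into its component in the horizontal plane $\mathcal H_q$ and the remainder. Since $|\tilde\gamma(v,t)|=|q|=1$ and the fibre-direction drift of a horizontal lift is bounded by the holonomy, i.e.\ by the enclosed area, the remainder is $O(A(\ot(t)))$, whereas the horizontal part is $O(\sqrt{A(\ot(t))})$; hence the factor $\mu(t)=O(A(\ot(t))^{-1/2})$ annihilates the remainder. As $\pi$ is a Riemannian submersion with $d\pi_q|_{\mathcal H_q}$ an isometry onto $T_x\mathbb S^2(1/2)$, the horizontal part of $\tilde\gamma(v,t)-q$ projects, up to higher order, onto $\gamma(v,\ot(t))-x$, so that $\mu(t)$ times it converges to $d\pi_q^{-1}$ of the limit of \eqref{rescG}, a planar circle $\mathcal C(v)\subset\mathcal H_q$ of radius $\sqrt{A_0/\pi}$. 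Thus $\mu(t)\bigl(\tilde\gamma(v,t)-q\bigr)\to\mathcal C(v)$ uniformly in $v$.

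The third summand diverges for each fixed $u\neq 0$, which forces us to read the limit in the pointed sense (locally smooth, or Hausdorff on compact sets, as customary for blow-ups at Type~I singularities): setting $u=s/(\mu(t)R(t))$ with $s\in\re$ gives $\mu(t)R(t)(e^{iu}-1)q\to is\,q=s\,Jq$ and $e^{iu}\to1$, so on compact subsets of the $(s,v)$–plane
\begin{equation*}
\widetilde F_{R_0}\;\longrightarrow\;R_T\,q+R_T\,\mathcal C(v)+s\,Jq\qquad(t\to T).
\end{equation*}
As $JN_q=-Jq$ on the unit sphere, $\mathcal H_q\perp Jq$, so the image of $(s,v)\mapsto R_Tq+R_T\mathcal C(v)+sJq$ is a right circular cylinder in the affine $3$-space $R_Tq+\{q\}^{\perp}$ (an $\re^3\subset\C^2$), with axis the line through $\sqrt{R_0^2-4T}\,q$ in the direction $Jq$ and with circular section of radius $R_T\sqrt{A_0/\pi}=\sqrt{(R_0^2-4T)A_0/\pi}$ — exactly the cylinder in the statement. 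The main obstacle lies in the second summand: one must show rigorously that the blow-up of the Hopf tube $F(\cdot,t)$ around $q$ reproduces, via $d\pi_q^{-1}$, the blow-up \eqref{rescG} of the base curve (equivalently, that the vertical part of the horizontal lift is negligible after rescaling by $\mu(t)$), and, relatedly, fix the precise sense in which the rescaled surfaces converge so that the compact fibre circle, blown up around $q$, contributes the line factor.
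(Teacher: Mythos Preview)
Your argument is correct and reaches the same limiting cylinder as the paper, but the route is genuinely different. The paper does not split $F_{R_0}=R(t)e^{iu}\tilde\gamma$ and estimate the three summands; instead it observes that the affine homotheties $\mu_t:\ce^2\to\ce^2$ and $\nu_t:\re^3\to\re^3$ implementing the rescalings conjugate the Hopf map $\pi_{R(t)}$ to a Hopf fibration $\widetilde\pi_t$ between the rescaled spheres, so that each $\widetilde F_{R_0}(\cdot,t)$ is still the Hopf tube over $\widetilde\gamma_{R_0}(\cdot,t)$ for $\widetilde\pi_t$. It then writes both the base curve and its Hopf tube via the exponential maps of the rescaled spheres and takes the limit $\lambda\to\infty$ of those explicit trigonometric formulas.

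What this buys the paper is that the ``vertical drift'' issue you flag never appears: since $\widetilde F_{R_0}(\cdot,t)$ is exactly $\widetilde\pi_t^{-1}(\widetilde\gamma_{R_0}(\cdot,t))$, the passage from base to total space is an identity at each $t$, not an approximation, and the limit reduces to elementary limits of $\cos(r/\lambda R)$, $\sin(r/\lambda R)$, $\cos(s/\lambda R)$, $\sin(s/\lambda R)$. Your approach, by contrast, is more elementary to set up (no conjugated fibration, no exponential-map formulae) but pays for it with the estimate that the $Jq$- and $q$-components of $\tilde\gamma(v,t)-q$ are $O(A(\ot))$ while the $\mathcal H_q$-component is asymptotic to $d\pi_q^{-1}(\gamma-x)$. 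This estimate is standard (the vertical drift of a horizontal lift along a short arc in a circle bundle is quadratic in the arc's diameter, hence $O(A)$), so your argument closes, but the paper's conjugation trick sidesteps it entirely. Both proofs introduce the arclength substitution $s$ on the fibre to produce the line factor, so on that point they coincide.
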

\begin{proof}
Let us denote 
\begin{equation}\label{lambda}
\lambda\equiv\lambda(t) :=  \sqrt{\frac{A_0}{\pi/2 - \( \pi/2- A_0 \) \(\frac{R_0^2}{R_0^2 - 4 t}\)}}.
\end{equation}
 We remark that $\lambda \to \infty$ when $t\to T= A_0 R_0^2 /2 \pi$ and recall that $R(t)= \sqrt{R_0^2- 4 t}$.

 The rescalings $\widetilde  F_{R_0}$ and $\widetilde \gamma_{R_0}:= R(t) \widetilde \gamma$, of $F_{R_0}$ and $\gamma_{R_0}:= R(t) \gamma$ respectively (see equations \eqref{rescF} and \eqref{rescG}), are just the restrictions to $F_{R_0}$ and $\gamma_{R_0}$ of the maps 
\begin{align*}
\mu_t &: \ce^2 \flecha \ce^2 ; \quad \mu_t(z) = R(t) q+ \lambda (z-R(t) q), \quad \text{ and } \\
 \nu_t&: \re^3 \flecha \re^3 ; \quad \nu_t(w) = R(t) x + \lambda \left(w- R(t) x\right),
 \end{align*} 
 which transform spheres in the following way:
 \begin{align*}
 \mu_t(\mathbb S^3_{R(t)}) = {\mathbb S^3}\! \left((1-\lambda) R(t) q, \lambda R(t) \right), \text{ and }\nu_t(\mathbb S^2_{R(t)/2}) = {\mathbb S^2}\! \left((1-\lambda) R(t) x, \lambda R(t)/2 \right),
 \end{align*}
 where $\mathbb S^m(y,r)$ indicates a sphere in $\re^{m+1}$ of radius $r$ and center $y$. Then the map 
 \begin{align*} 
 \widetilde{\pi_t} &=  \nu_t \circ \pi_{R(t)} \circ {\mu_t}^{-1} : {\mathbb S^3}\! \left((1-\lambda) R(t) q, \lambda R(t) \right) \flecha 
{\mathbb S^2}\! \left((1-\lambda) R(t) x, \lambda R(t)/2 \right) 
\end{align*}
is a Hopf fibration which, for every $z\in \mathbb S^3 \subset\ce^2$, takes the geodesic circle $(1-\lambda) R(t) q + \lambda \ R(t)e^{i \beta}z \in \mathbb S^3((1-\lambda) R(t) q, \lambda R(t))$ into the point $(1-\lambda) R(t)x + \lambda R(t)  \pi(e^{i\beta} z)\in \mathbb S^2((1-\lambda) R(t)x, \lambda R(t)/2)$. Moreover,  $\widetilde \pi_t \circ\widetilde F_{R_0}(\cdot,t) = \nu_t\circ \pi_{R(t)}\circ \mu_t^{-1} \circ \widetilde F_{R_0}(\cdot,t) = \nu_t\circ \pi_{R(t)} \circ F_{R_0}(\cdot,t) = \nu_t \circ \gamma_{R_0}(\cdot,t) = \widetilde \gamma_{R_0}(\cdot,t)$. Moreover, since $\widetilde F_{R_0}(\cdot,t)$ is a Lagrangian submanifold of $\ce^2$, Proposition~\ref{MHC} applies to the Hopf map $\widetilde \pi_t$ and so  $\widetilde F_{R_0}(\cdot,t)$ is the preimage of $\widetilde \gamma_{R_0}(\cdot, t)$ by $\widetilde \pi_t$.

Let $\{e_1,e_2\}$ be an orthonormal basis of $T_{R(t) x} \mathbb S^2((1-\lambda) R(t)x, \lambda \ R(t)/2)$, and let $\{e_1^*,e_2^*\}$ be its corresponding lifting to the fiber on $R(t)x$ in  $\mathbb S^3((1-\lambda) R(t) q, \lambda \ R(t))$. Since $\widetilde \gamma_{R_0}(\cdot, t)$ converges to a circle with center at $R(T) x$ and radius $R(T) \sqrt{A_0/\pi}$, when $t$ is near $T$, $\widetilde \gamma_{R_0}(\cdot, t)$ becomes convex near its limit, and can be parametrized in the form 
\begin{align*}
\widetilde \gamma_{R_0}(\varphi, t) &= \exp_{R(t)x} r(\varphi,t) (\cos \varphi \ e_1 +  \sin\varphi \ e_2) \nn \\
&= (1-\lambda ) R(t) x + \frac\lambda{2} R(t) \left(2x \cos \frac{r(\varphi,t)}{\lambda R(t)} + (\cos \varphi e_1 +  \sin\varphi e_2) \sin \frac{r(\varphi,t)}{\lambda R(t)}\right)
\end{align*} 
with $\lim_{t\to T}r(\varphi,t) = R(T) \sqrt{A_0/\pi}$, where $\exp$ denotes the exponential map in $\mathbb S^2((1-\lambda) R(t)x, \lambda R(t)/2)$.
As a consequence, since $\widetilde F_{R_0}(\cdot,t)$ is the preimage of $\widetilde \gamma_{R_0}(\cdot, t)$ by $\widetilde\pi_t$, it can be parametrized (as was recalled in section \ref{hopf}) by
\begin{equation}
\begin{aligned}\label{torot}
\widetilde F_{R_0}(\varphi, s ,t) &= \exp_{(1-\lambda)R(t)q + e^{i(s/(\lambda R(t)))} \lambda R(t) q} r(\varphi,t) (\cos \varphi e_1^* + \sin\varphi e_2^*) \\
&= (1-\lambda ) R(t) q + \lambda R(t) \left(\left(q \cos \frac{s}{\lambda R(t)} + Jq \sin \frac{s}{\lambda R(t)}\right) \cos \frac{r(\varphi,t)}{\lambda R(t)} \right. \\
&  \left.\qquad \qquad \qquad \qquad \qquad + (\cos \varphi e_1^* +  \sin\varphi e_2^*) \sin \frac{r(\varphi,t)}{\lambda R(t)}\right),
\end{aligned}
\end{equation}
where $s$ is the arclength of the curve $(1-\lambda)R(t)q + e^{i(s/(\lambda R(t)))} \lambda R(t) q$.

Now, taking the limit in \eqref{torot}  when $t\to T$ (which implies $\lambda\to \infty$), we obtain the cylinder 
\begin{align*}
\widetilde F_{R_0}(\varphi, s ,T) &=  R(T) q + s Jq + (\cos \varphi e_1^* +  \sin\varphi e_2^*)  R(T) \sqrt{A_0/\pi},
\end{align*}
which is the cylinder indicated in the statement of the Proposition. \end{proof}

\begin{nota}\label{resSN}
{\rm We observe that the rescaling \eqref{rescF} is not exactly the standard one given in \cite{Hu84}. Nevertheless, they only differ in the product by a bounded function and consequently they are equivalent.   Thus the blow up will be again a cylinder in $\re^3\subset \ce^2$ }.
\end{nota}

\begin{nota}\label{notsingI}
{\rm All the singularities appearing in Theorem \ref{TheoremA} are Type I singularities.
In fact, following Section 2 and using Theorem~\ref{TF}, it is not difficult to check that the second fundamental form $\sigma$ of the evolution \eqref{evexpFR} is given by
\begin{align} \label{sigka} | \sigma|^2 = \frac{4+\kappa_\gamma^2}{R_0^2-4t}, \qquad t\in [0,T). 
\end{align} 
In case (a), we have that $T=R_0^2/4$ and we know that $\kappa_\gamma$ is bounded by some constant $L$; then we get that $(T-t)|\sigma|^2 = 1+\kappa_\gamma^2/4 \le 1+L/4 $, which implies the condition of being a Type I singularity.

In case (b), we have that $T=t(\tau) = A_0 R_0^2 /2 \pi < R_0^2 /4$ and we know that $\gamma$ develops a Type I singularity. So there exists a constant $C$ such that $(\tau - \ot)\kappa_\gamma^2 \leq C$. Using that $A_0<\pi/2$ and \eqref{ot}, we get that
$$(T-t)|\sigma|^2 < 1+ \frac{(T-t)\kappa_\gamma^2}{R_0^2-4t}= 1+\frac{1-e^{4(\ot-\tau)}}{4}\kappa_\gamma^2 .$$ 
If we define $G(\ot)=\bigl(1-e^{4(\ot-\tau)}\bigr)/4-(\tau - \ot)$, it is easy to check that $G'(\ot)>0$ and so $G(\ot)<G(\tau)=0$. Hence we conclude that $(T-t)|\sigma|^2 < 1+(\tau - \ot)\kappa_\gamma^2 \leq 1+C$,
that shows that the behaviour of $|\sigma|$ in case (b) is determined by the one of $|\kappa_\gamma|$, which corresponds to a Type I singularity.}
\end{nota}

\author{Ildefonso Castro, icastro@ujaen.es \\
Departamento de Matem\'{a}ticas \\
Universidad de Ja\'{e}n \\
23071 Ja\'{e}n, Spain 
}
\vspace{0.1cm}

\author{Ana M.~Lerma, alerma@ujaen.es \\
Departamento de Did\'{a}ctica de las Ciencias \\
Universidad de Ja\'{e}n \\
23071 Ja\'{e}n, Spain 
}
\vspace{0.1cm}

\author{Vicente Miquel, miquel@uv.es \\
Departamento de Geometr\'{\i}a y Topolog\'{\i}a \\
Universidad de Valencia \\
46100-Burjassot (Valencia), Spain 
}


\begin{thebibliography}{1}\bibliographystyle{alpha}

\bibitem{AB10}
B.~Andrews and C.~Baker.
\newblock {\em Mean curvature flow of pinched submanifolds to spheres.}
\newblock J.\ Differential Geom.\  {\bf 85} (2010), 357--395.

\bibitem{CL14} I.~Castro and A.M.~Lerma.
\newblock {\em The Clifford torus as a self-shrinker for the Lagrangian mean curvature flow.}
\newblock Int.\ Math.\ Res.\ Notices {\bf 2014} (2014), 1515--1527.

\bibitem{CZ01}
K.-S.~Chou and X.-P.~Zhu.
\newblock {\em The Curve Shortening Problem.}
\newblock Chapman \& Hall/CRC (2001).

\bibitem{Ga90}
M.E.~Gage.
\newblock {\em Curve shortening on surfaces.}
\newblock Ann.\ Scient.\ Ec.\ Norm.\ Sup.\ {\bf 23} (1990), 229--256.

\bibitem{GSSmZ07}
K.~Groh, M.~Schwarz, K.~Smoczyk and K.~Zehmisch.
\newblock {\em Mean curvature flow of monotone Lagrangian submanifolds.}
\newblock Math.\ Z.\ {\bf 257} (2007), 295--327.

\bibitem{Hu84}
G.~Huisken.
\newblock {\em Flow by mean curvature of convex surfaces into spheres.}
\newblock J.\ Differential Geom.\  {\bf 20} (1984), 237--266.

\bibitem{Hu90}
G.~Huisken.
\newblock {\em Asymptotic behavior for singularities of the mean curvature flow.}
\newblock J.\ Differential Geom.\  {\bf 31} (1990), 285--299.

\bibitem{Ne07}
A.~Neves.
\newblock {\em Singularities of Lagrangian mean curvature flow:
Zero-Maslov class case.}
\newblock Invent.\ Math.\  {\bf 168} (2007), 449--484.

\bibitem{Ne10a} A.~Neves.
\newblock {\em Singularities of Lagrangian mean curvature flow: monotone case.}
\newblock Math.\ Res.\ Lett.\ {\bf 17} (2010), 109--126.

\bibitem{Ne10b} A.~Neves.
\newblock {\em Recent progress on singularities of Lagrangian mean curvature flow.}
\newblock Surveys in Geometric Analysis and Relativity, ALM {\bf 20} (2011), 413--436.


\bibitem{Pi85} U.~Pinkall.
\newblock {\em Hopf tori in $\mathbb S^3$.}
\newblock Invent.\ Math.\ {\bf 81} (1985), 379--386.

\bibitem{RU98}
A.~Ros and F.~Urbano.
\newblock {\em Lagrangian submanifolds of $\ce^n$ with conformal Maslov form and the Whitney sphere.}
\newblock  J.\ Math.\ Soc.\ Japan {\bf 50} (1998), 203--226.

\bibitem{Sm11}
K.~Smoczyk.
\newblock {\em Mean curvature flow in higher codimension. Introduction and survey.}
\newblock  Global Differential Geometry, Springer Proceedings in Mathematics, 2012, Volume 17, Part 2, 231--274.

\bibitem{We91}
J.-L.~Weiner
\newblock {\em Flat tori in $\mathbb{S}^3$ and their Gauss maps.}
\newblock Proc.\ London Math.\ Soc.\ {\bf 62} (1991), 54--76.

\bibitem{Zh98}
X.-P.~Zhu.
\newblock {\em Asymptotic behavior of anisotropic curve flows.}
\newblock J.\ Differential Geom.\  {\bf 48} (1998), 225--274.


\end{thebibliography}
\end{document}